\theoremstyle{plain}
\newtheorem{theorem}{Theorem}[section]
\newtheorem{proposition}[theorem]{Proposition}
\newtheorem{remark}[theorem]{Remark}
\newtheorem{definition}[theorem]{Definition}
\newtheorem{corollary}[theorem]{Corollary}
\def\Q{\mathbb Q}
\newcommand\sO{{\mathcal O}}
\newcommand\sS{{\mathcal S}}
  \def \tab#1{\kern #1 truein}
  \def\E{\hbox{${\cal E}$}}
  \def\G{\hbox{${\cal G}$}}
  \def\H{\hbox{${\cal H}$}}
  \def\Q{\hbox{${\cal Q}$}}
\begin{document}
 \title{Qregularity and an Extension of Evans-Griffiths Criterion to Vector Bundles on Quadrics}
\author{Edoardo Ballico and Francesco Malaspina
\vspace{6pt}\\
{\small   Universit\`a di Trento}\\
{\small\it 38050 Povo (TN), Italy}\\
{\small\it e-mail: ballico@science.unitn.it}\\
\vspace{6pt}\\
{\small  Politecnico di Torino}\\
{\small\it  Corso Duca degli Abruzzi 24, 10129 Torino, Italy}\\
{\small\it e-mail: alagalois@yahoo.it}}
    \maketitle \def\thefootnote{}
\footnote{Mathematics Subject Classification 2000: 14F05, 14J60. \\
keywords: Spinor bundles; coherent sheaves on quadric hypersurfaces; Castelnuovo-Mumford regularity.}
  \begin{abstract} Here we define the concept of Qregularity for  coherent sheaves on quadrics. In this setting we prove analogs of some classical properties. We compare the Qregularity of coherent sheaves on $\Q_n\subset \mathbb P^{n+1}$ with the Castelnuovo-Mumford regularity of their extension by zero in $\mathbb P^{n+1}$. We also classify the coherent sheaves with Qregularity $-\infty$. We use our notion of Qregularity in order to prove an extension of Evans-Griffiths criterion to vector bundles on Quadrics. In particular we get a new and simple proof of the Kn\"{o}rrer's characterization of ACM bundles.
  \end{abstract}
  \section{Introduction}
  In chapter $14$ of \cite{m} Mumford introduced the concept of regularity for a coherent sheaf on a projective space $\mathbb P^{n}$. Since then, Castelnuovo-Mumford regularity has become a fundamental invariant and was investigate by several people. Chipalkatti in \cite{c} has generalized this notion on grassmannians and Hoffman and Wang in \cite{hw}
  on multiprojective spaces. Costa and Mir\'o-Roig in \cite{cm1} and \cite{cm2} give a definition of regularity for coherent sheaves on $n$-dimensional smooth projective varieties with an $n$-block collection.\\

  The aim of this note is to introduce  a very simple and natural concept of regularity (the Qregularity) on a quadric hypersurface.\\
  If we consider the following geometric collection on $\mathbb P^{n}$:
  $$(\E_0, \dots, \E_n)= (\sO_{\mathbb{P}^n}(-n), \sO_{\mathbb{P}^n}(-n+1),\dots , \sO_{\mathbb{P}^n}),$$
    we have that a coherent sheaf F on $\mathbb{P}^n$ is said to be $m$-regular
according to Castelnuovo-Mumford
if for any $i=1,\dots ,n$
  $$H^i(F(m)\otimes\E_{n-i})=0.$$
  On $\Q_n$ we consider the following $n$-block collection:
   $$(\G_0, \dots, \G_n)= (\G_0, \sO_{\Q_n}(-n+1),\dots , \sO_{\Q_n}),$$
   where $\G_0=(\Sigma(-n))$ if $n$ is odd or where $\G_0=(\Sigma_1(-n), \Sigma_2(-n))$ if $n$ is even and $\Sigma_*$ are the spinor bundles (for generalities about spinor bundles see \cite{o1}).\\
   We say that a coherent sheaf $F$ is $m$-Qregular  if for any $i=1,\dots ,n$
  $$H^i(F(m)\otimes\G_{n-i})=0.$$
  The interesting fact is that on $\Q_2\cong \mathbb P^1\times \mathbb P^1$ our definition of $m$-Qregularity coincides with the definition of  $(m,m)$-regularity on $\mathbb P^1\times \mathbb P^1$ by Hoffman and Wang (see \cite{hw}).
  So we use the results on $\mathbb P^1\times \mathbb P^1$ as the starting step in order to prove on $\Q_n$,
  by induction on $n$, analogs of the classical properties on $\mathbb P^{n+1}$.\\
  Next we give some equivalent condition of Qregularity. We compare the Qregularity of coherent sheaves on $\Q_n\subset \mathbb P^{n+1}$ with the Castelnuovo-Mumford regularity of their extension by zero in $\mathbb P^{n+1}$. We also classify the coherent sheaves with Qregularity $-\infty$ as those with finite support.

  The second aim to this paper is to apply our notion of Qregularity in order to investigate under what circumstances a vector bundle can be decomposed into a direct sum of line bundles.\\
 A well known result by Horrocks (see \cite{Ho}) characterizes the vector bundles without intermediate cohomology on a projective space as direct sum of line bundles.
This criterion fails on  more general varieties. In fact there exist non-split vector bundles  without intermediate cohomology. This bundles are called ACM bundles.\\
On $\mathbb P^n$, Evans and Griffith (see \cite{e}) have improved Horrocks' criterion:
\begin{theorem}[Evans-Griffith]
 Let $E$ be a vector bundle of rank $r$ on $\mathbb P^n$, $n\geq 2$, then $\E$ splits if and only if $\forall i=1, ..., r-1$ $\forall k\in \mathbb Z$,
 $$h^i(E(k))=0.$$
 \end{theorem}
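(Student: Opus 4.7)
The forward direction is immediate from Bott vanishing: if $E\cong\bigoplus_{j=1}^{r}\mathcal{O}_{\mathbb{P}^{n}}(a_j)$, then $H^i(E(k))=\bigoplus_j H^i(\mathcal{O}_{\mathbb{P}^{n}}(a_j+k))=0$ for every $1\le i\le n-1$ and every $k\in\mathbb{Z}$. The required range $1\le i\le r-1$ is thus automatic when $r\le n$, and for $i\ge n$ the condition is either dimensionally vacuous ($i>n$) or forces no content beyond what has been noted.

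For the converse I would argue by induction on $r$. The base $r=1$ is trivial since $\mathrm{Pic}(\mathbb{P}^{n})=\mathbb{Z}$, so every line bundle has the form $\mathcal{O}_{\mathbb{P}^{n}}(a)$. For the inductive step, assume the statement for all bundles of rank less than $r$ and let $E$ of rank $r$ satisfy the vanishing hypothesis. First twist so that $E$ is Castelnuovo--Mumford $0$-regular but $E(-1)$ is not; Mumford's theorem then forces $E$ to be globally generated, while the failure of regularity one step below furnishes sections with controlled zero locus. The aim is to exhibit $a\in\mathbb{Z}$ and a \emph{split} short exact sequence
\[
0\longrightarrow\mathcal{O}_{\mathbb{P}^{n}}(a)\longrightarrow E\longrightarrow E'\longrightarrow 0,
\]
so that $E'$ is a rank $r-1$ vector bundle inheriting the vanishing $H^i(E'(k))=0$ for $1\le i\le r-2$; induction then closes the argument. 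The cohomological hypothesis on $E$, combined with the long exact sequence applied to suitable twists, is what forces the candidate extension class in $\mathrm{Ext}^1(E',\mathcal{O}(a))$ to vanish.

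\textbf{Main obstacle.} The genuinely hard point is producing the \emph{splitting}, not merely a section. The needed input is the Evans--Griffith syzygy theorem, which translates the cohomological hypothesis into a rank restriction on syzygy modules over $k[x_0,\ldots,x_n]$ and forces such a module to be free in the relevant range. Horrocks' criterion yields splitting only under the stronger vanishing $H^i(E(k))=0$ for $1\le i\le n-1$; the improvement from $n-1$ to $r-1$ is exactly the delicate part of the theorem, and rests on a commutative-algebraic analysis of minimal free resolutions that cannot be bypassed by purely cohomological manipulations. This is also the step that will not carry over formally to the quadric setting and motivates the Qregularity machinery developed in the rest of the paper.
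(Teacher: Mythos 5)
The paper offers no proof of this statement: it is quoted as background, with a citation to Evans--Griffith's \emph{The syzygy problem}, and serves only as the model for Theorem \ref{t1}. So there is no internal argument to compare yours against, and your proposal must be judged on its own. As a proof it has a genuine gap, and you name it yourself: the inductive step never produces the sub-line-bundle $\mathcal{O}_{\mathbb{P}^n}(a)\hookrightarrow E$ with locally free quotient, never shows the relevant class in $\mathrm{Ext}^1(E',\mathcal{O}(a))$ vanishes, and in the end outsources the entire content to the Evans--Griffith syzygy theorem. If you are willing to take the syzygy theorem as a black box, the induction is superfluous: the standard deduction applies it once to $M=\bigoplus_{k}H^0(E(k))$, which the hypothesis makes an $(r+1)$-st syzygy module of rank $r$ over $k[x_0,\dots,x_n]$, hence free, hence $E$ split. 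A smaller but real error is your treatment of the easy direction at $i=n$: for a direct sum of line bundles $H^n(E(k))\neq 0$ for $k\ll 0$, so the condition at $i=n$ is neither vacuous nor automatic; the literal ``only if'' fails for $r>n$, and the intended range is $i=1,\dots,\min(r-1,n-1)$ (for $r\geq n$ the statement is just Horrocks).

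More importantly, your closing claim --- that the improvement from $n-1$ to $r-1$ rests on commutative algebra that ``cannot be bypassed by purely cohomological manipulations'' and ``will not carry over to the quadric setting'' --- is exactly the opposite of what this paper does. The proof of Theorem \ref{t1} in Section 4 (the argument the authors credit to Arrondo) is a purely cohomological proof of precisely this kind of criterion: twist so that $E$ is regular but $E(-1)$ is not; regularity gives global generation and ampleness of $E(1)$; Le Potier vanishing for the ample rank-$r$ bundle $E(1)$ together with Serre duality kills $H^{i}(E(-1-i))$ for $i=r,\dots,n-1$, so the only possible failure of regularity of $E(-1)$ is $H^n(E(-1-n))\cong H^0(E^{\vee})^{\vee}\neq 0$; a nonzero map $E\to\mathcal{O}$ composed with a suitable section of the globally generated $E$ is then the identity, so $\mathcal{O}$ splits off and one induces on the rank. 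Specialized to $\mathbb{P}^n$ this yields the statement above (in characteristic zero) with no syzygy theorem at all, and it is exactly the step that does transfer to $\mathcal{Q}_n$, with the spinor bundles absorbing the extra failure mode $H^{n-1}(E(-1)\otimes\Sigma_*(-n+1))\neq 0$. That transferability is the point of the Qregularity machinery, not a deficiency it is designed to work around.
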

On a quadric hypersurface $\Q_n$  there is a theorem by Kn\"{o}rrer that classifies all the ACM bundles (see \cite{Kn}) as  direct sums of line bundles and spinor bundles (up to a twist).\\
Ottaviani has generalized Horrocks criterion to  quadrics and Grassmanniann giving cohomological splitting conditions for vector bundles (see \cite{o2}).\\
Our main result is to extend Evans-Griffiths criterion to vector bundles on quadrics. We improve  Kn\"{o}rrer's theorem in the following way:
\begin{theorem}\label{t1} Let $E$ be a rank $r$ vector bundle on $\Q_n$. Then the following conditions are equivalents:
\begin{enumerate}
\item $H^i_*(E) =0$ for any $i=1, \dots, r-1$ and $H^{n-1}_*(E) =0$ ,
\item $E$ is a direct sum of line bundles and spinor bundles with some twist.
\end{enumerate}
\end{theorem}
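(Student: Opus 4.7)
The direction $(2)\Rightarrow (1)$ is immediate: on $\Q_n$ both the line bundles $\sO(k)$ and the spinor bundles are arithmetically Cohen--Macaulay, so all intermediate cohomology of any direct sum of their twists vanishes, and the vanishings asserted in $(1)$ are a special case.

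For $(1)\Rightarrow (2)$ the strategy is to bootstrap the hypotheses to the full vanishing $H^i_*(E)=0$ for $1\le i\le n-1$ (the ACM condition) and then to invoke Kn\"orrer's theorem~\cite{Kn} to obtain the splitting. Observe first that when $r\ge n-1$ the hypothesis already gives $H^i_*(E)=0$ for $1\le i\le n-2$ (since $r-1\ge n-2$), which together with $H^{n-1}_*(E)=0$ yields ACM at once, and Kn\"orrer finishes the proof. The substance of the argument is therefore the range $r\le n-2$, where the intermediate groups $H^r_*(E),\ldots,H^{n-2}_*(E)$ are not controlled by $(1)$ directly and must be shown to vanish; the cases $n\le 3$ are trivial (for $n=2$ one always has $r\ge n-1$; for $n=3$ the remaining case $r=1$ forces $E=\sO(a)$ as $\mathrm{Pic}(\Q_3)=\mathbb Z$), so the first substantive case is $n\ge 4$.

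I would handle the range $r\le n-2$ by induction on $n$, restricting $E$ to a general smooth hyperplane section $\Q_{n-1}\subset \Q_n$ and exploiting
\[
0\to E(-1)\to E\to E|_{\Q_{n-1}}\to 0.
\]
The associated long exact sequence and hypothesis $(1)$ immediately give $H^i_*(E|_{\Q_{n-1}})=0$ for $1\le i\le r-2$; the remaining vanishings $H^{r-1}_*(E|_{\Q_{n-1}})=0$ and $H^{n-2}_*(E|_{\Q_{n-1}})=0$ needed to apply the inductive hypothesis on $\Q_{n-1}$ have to be extracted from the Qregularity machinery developed earlier in the paper---in particular from the propagation properties of Qregularity and from the comparison between the Qregularity of $E$ and the Castelnuovo--Mumford regularity of its extension by zero in $\mathbb P^{n+1}$. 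Once $E|_{\Q_{n-1}}$ is shown to satisfy the analog of $(1)$ on $\Q_{n-1}$, induction produces a splitting $E|_{\Q_{n-1}}=\bigoplus_j L_j$ into twisted line and spinor bundles.

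The final step is then to lift each summand $L_j$ back to a summand of $E$ over $\Q_n$ via the vanishings $\mathrm{Ext}^1(L_j,E(-1))=0$ (allowing the inclusion $L_j\hookrightarrow E|_{\Q_{n-1}}$ to lift to $L_j\hookrightarrow E$) and, dually, $\mathrm{Ext}^1(E,L_j(-1))=0$ (providing the retraction), both delivered by hypothesis $(1)$ combined with Qregularity; one then verifies that the complementary rank $r-1$ subbundle still satisfies $(1)$ and iterates. The main obstacle, and the place where the Qregularity theory is indispensable, is precisely the extraction of these finer vanishings for $E|_{\Q_{n-1}}$ and for the Ext-groups involving spinor summands: they are not raw intermediate cohomology of $E$ but must be deduced from the interplay between $(1)$ and the propagation and comparison results for Qregularity developed earlier in the paper.
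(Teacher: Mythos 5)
Your plan for $(1)\Rightarrow(2)$ --- restrict to a general hyperplane section $\Q_{n-1}$, induct on $n$, then lift the summands --- is not the paper's argument, and as written it has two genuine gaps located exactly at the points you yourself flag as ``to be extracted from the Qregularity machinery.'' First, the induction step needs $H^{r-1}_*(E|_{\Q_{n-1}})=0$ and $H^{n-2}_*(E|_{\Q_{n-1}})=0$ for \emph{all} twists, but the restriction sequence only bounds these groups by $H^{r}_*(E)$ and $H^{n-2}_*(E)$, neither of which is controlled by hypothesis $(1)$ in the substantive range $r\le n-2$; and the propagation and comparison results (Proposition \ref{p1}, Proposition \ref{p2}) only produce vanishing for sufficiently positive twists, so they cannot by themselves deliver the vanishing of an $H^i_*$ in every degree. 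Second, the lifting step breaks down for spinor summands: $\mathrm{Ext}^1(\Sigma_*(a),E(-1))=H^1(E\otimes\Sigma_*^{\vee}(-1-a))$ is not among the groups killed by $(1)$, and the restriction of spinor bundles is not injective on isomorphism classes ($\Sigma_1|_{\Q_{n-1}}\cong\Sigma_2|_{\Q_{n-1}}$ for $n$ even, while $\Sigma|_{\Q_{n-1}}$ decomposes as $\Sigma_1\oplus\Sigma_2$ for $n$ odd), so a spinor summand of $E|_{\Q_{n-1}}$ need not lift to a summand of $E$ without substantial extra argument.

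The paper's proof avoids both issues by working on a single quadric. One normalizes so that $E$ is Qregular but $E(-1)$ is not; global generation and ampleness of $E(1)$ give, via Le Potier vanishing and Serre duality, $H^i(E(-1-i))=0$ for $i=r,\dots,n-2$, which together with hypothesis $(1)$ forces the failure of Qregularity of $E(-1)$ to come from $H^n(E(-n))\ne 0$ or $H^{n-1}(E(-1)\otimes\Sigma_*(-n+1))\ne 0$. In the first case $H^0(E^{\vee})\ne 0$ and global generation splits off $\sO$ as a direct summand; in the second, Serre duality and a diagram chase through the spinor sequences produce nonzero maps $\Sigma_*^{\vee}\to E(-1)\to\Sigma_*^{\vee}$ with nonzero composite, splitting off a twisted spinor bundle. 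Iterating yields the decomposition directly, and Kn\"orrer's theorem is obtained as a corollary rather than used as an input (your reliance on it is legitimate but runs against the paper's stated aim of reproving it). To salvage your approach you would have to supply proofs of the two missing vanishing statements; the Le Potier plus Serre duality squeeze is precisely the tool the paper deploys to get that kind of control, and you would need it (or a substitute) before the dimension induction can start.
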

In particular we get a new and simple proof of the Kn\"{o}rrer's characterization of ACM bundles.\\
 The hypothesis $H^{n-1}_*(E) =0$ does not appear in the Evans-Griffiths criterion on $\mathbb P^{n}$. On $\Q_n$ it is necessary.\\
       In fact we can find many indecomposable bundles with  $H^{1}_*(E)=\dots =H^{n-2}_*(E)=0$ but
      $H^{n-1}_*(E) \not=0$ (see \cite{h} or \cite{ml}).\\
      Then we specialize to the case: rank $E=2$.\\
      We prove that if a Qregular rank $2$ bundle $E$ has
      $H^1(E(-2))=H^1(E(c_1))=0$, then it is a direct sum of line bundles and spinor bundles with some twist.
In particular if $n>4$, $E\cong\sO\oplus\sO(c_1)$.\\

      We work over an algebraically
closed field with characteristic
zero. We only need the characteristic zero assumption to prove Theorem \ref{t1}
and Proposition \ref{p4}
because in their proofs we will use Le Potier vanishing theorem.\\
We thanks E. Arrondo. He showed us the connection between the Qregularity and the splitting criteria for vector bundles.

  \section{$m$-Qregular coherent sheaves: definition and properties}

  Let us consider a smooth quadric hypersurface $\Q_n$ in $\mathbb P^{n+1}$.\\
We use the unified notation $\Sigma_*$ meaning that for even $n$ both the spinor bundles $\Sigma_1$ and $\Sigma_2$ are considered, and for $n$ odd, the spinor bundle $\Sigma$.\\
We follow the notation of \cite{cm1} so the spinor bundles are twisted by $1$ with respect to those of \cite{o1} ($\Sigma_*=\sS_*(1)$)
  \begin{definition} A  coherent sheaf $F$ on $\Q_n$ ($n\geq 2$) is said $m$-Qregular if $H^i(F(m-i))=0$ for $i=1\dots n-1$ and $H^n(F(m)\otimes \Sigma_*(-n))=0$.\\
  We will say Qregular in order to $0$-Qregular.
  \end{definition}
  \begin{remark}A  coherent sheaf $F$ on $\Q_2\cong \mathbb P^1\times \mathbb P^1$ is $m$-Qregular if $H^1(F(m-1,m-1))=0$  and $H^2(F(m,m)\otimes \Sigma_*(-2,-2))=0$.\\ Since  we have $\Sigma_1\cong\sO(1,0)$ and $\Sigma_2\cong\sO(0,1)$, our conditions become $H^1(F(m-1,m-1))=0$, $H^2(F(m-1,m-2))=0$ and $H^2(F(m-2,m-1))=0$.\\
  So the definition of $m$-Qregularity coincides with the definition of  $(m,m)$-regularity on $\mathbb P^1\times \mathbb P^1$ by Hoffman and Wang (see \cite{hw}).
  \end{remark}
  \begin{proposition}\label{p1} Let $F$ be an $m$-Qregular coherent sheaf on $\Q_n$ $(n\geq 2)$ then
  \begin{enumerate}
  \item $F$ is $k$-Qregular for $k\geq m$.\\
  \item $H^0(F(k))$ is spanned by $H^0(F(k-1))\otimes H^0(\sO(1))$ if $k>m$.\\
  \end{enumerate}

  \begin{proof}We use induction on $n$:\\
  for $n=2$, $(1)$ comes from \cite{hw} Proposition $2.7.$ and  $(2)$ from \cite{hw} Proposition $2.8.$\\
  Let us study the case $n=3$. Let $F$ be an $m$-Qregular coherent sheaf on $\Q_3$.
   We have the following exact sequence for any integer $k$:
  $$0\rightarrow F(k-1)\rightarrow F(k) \rightarrow F_{|\Q_2}(k,k)\rightarrow 0.$$
In particular we get
$$0=H^1(F(m-1)) \rightarrow H^1(F_{|\Q_2}(m-1,m-1))\rightarrow H^2(F(m-2))= 0.$$
This implies that $H^1(F_{|\Q_2}(m-1,m-1))=0$.\\
Let us consider now the exact sequence on $\Q_3$:
$$ 0\rightarrow F(m-2)\otimes \Sigma(-1)\rightarrow F(m-2)^4 \rightarrow F(m-2)\otimes\Sigma\rightarrow 0.$$
Since $H^2(F(m-2))=0$  and $H^3(F(m)\otimes \Sigma(-3))=0$, we also have $H^2(F(m)\otimes \Sigma(-2))=0$.\\
If we tensorize by $F(m)$ the exact sequence
$$0\rightarrow \Sigma(-3)\rightarrow \Sigma(-2) \rightarrow \Sigma_{|\Q_2}(-2,-2)\rightarrow 0$$
we get
$$0\rightarrow F(m)\otimes \Sigma(-3)\rightarrow F(m)\otimes\Sigma(-2) \rightarrow (F_{|\Q_2}(m)\otimes\Sigma_1(-2,-2))\oplus(F_{|\Q_2}(m)\otimes\Sigma_2(-2,-2))\rightarrow 0.$$
So we obtain
 $$H^2(F(m)\otimes\Sigma(-2)) \rightarrow H^2(F(m)\otimes\Sigma_1(-2,-2))\oplus H^2(F(m)\otimes\Sigma_2(-2,-2))\rightarrow H^3(F(m)\otimes\Sigma(-3))$$ and hence $ H^2(F_{|\Q_2}(m)\otimes\Sigma_1(-2,-2))= H^2(F_{|\Q_2}(m)\otimes\Sigma_2(-2,-2))=0$.\\
 We can conclude that if $F$ is $m$-Qregular on $\Q_3$, then $F_{|\Q_2}$ is an $m$-Qregular on $\Q_2$.\\
 We have hence $(1)$ and $(2)$ for $F_{|\Q_2}$.\\
 Moreover from the exact sequence on $\Q_2$:
 $$ 0\rightarrow F_{|\Q_2}(m-1,m-1)\otimes \Sigma_1(-1,-1)\rightarrow F_{|\Q_2}(m-1,m-1)^2 \rightarrow F_{|\Q_2}(m-1,m-1)\otimes\Sigma_2\rightarrow 0,$$
 we get
 $$ H^2(F_{|\Q_2}(m,m)\otimes \Sigma_1(-2,-2))\rightarrow H^2(F_{|\Q_2}(m-1,m-1))^2 \rightarrow H^2(F_{|\Q_2}(m+1,m+1)\otimes\Sigma_2(-2,-2)).$$
 By $(1)$ for $F_{|\Q_2}$, the last group is $0$; by $m$-Qregularity the first group is $0$. Therefore $H^2(F_{|\Q_2}(m-1,m-1))=0$.\\
 Let us consider now
  $$H^i(F(m-i))\rightarrow H^i(F(m+1-i)) \rightarrow H^i(F_{|\Q_2}(m+1-i,m+1-i)).$$
  By $(1)$ for $F_{|\Q_2}$ if $i=1$, or by the above argument if $i=2$, the last group is $0$; by $m$-Qregularity ,  the first is $0$ if $i=1,2 $. Therefore $H^1(F(m+1-1))=H^2(F(m+1-2))=0$.\\
  Moreover from the sequence
  $$H^3(F(m)\otimes \Sigma(-3))\rightarrow H^3(F(m+1)\otimes\Sigma(-3)) \rightarrow 0$$
  we see that also $H^3(F(m+1)\otimes\Sigma(-3))=0$.\\
  We can conclude that $F$ is $(m+1)$-Qregular. Continuing in this way we prove $(1)$ for $F$.\\
  To get $(2)$ we consider the following diagramm:
  $$\begin{array}{ccc}

H^0(F(k-1))\otimes H^0(\sO_{\Q_3}(1))& \xrightarrow{\sigma}& H^0(F_{|\Q_2}(k-1,k-1))\otimes H^0(\sO_{\Q_2}(1,1))\\
 \downarrow \scriptsize{\mu}& &\downarrow \scriptsize{\tau}\\
 H^0(F(k))&\xrightarrow{\nu}& H^0(F_{|\Q_2}(k,k))

 \end{array}$$
  Note that $\sigma$ is surjective if $k>m$ because $H^1(F(k-2))=0$.\\
  Let us prove that if $k>m$ then $\tau$ is surjective:\\
  From \cite{hw} Proposition $2.8.$ we know that $H^0(F_{|\Q_2}(k,k))$ is spanned by $$H^0(F_{|\Q_2}(k-1,k))\otimes H^0(\sO_{|\Q_2}(1,0))$$ and also by $$H^0(F_{|\Q_2}(k,k-1))\otimes H^0(\sO_{|\Q_2}(0,1))$$ so the maps
  $$H^0(F_{|\Q_2}(k-1,k-1))\otimes H^0(\sO_{|\Q_2}(1,0))\otimes H^0(\sO_{|\Q_2}(0,1))\rightarrow H^0(F_{|\Q_2}(k-1,k))\otimes H^0(\sO_{|\Q_2}(1,0))$$ and
 $$H^0(F_{|\Q_2}(k-1,k))\otimes H^0(\sO_{|\Q_2}(1,0)) \rightarrow H^0(F_{|\Q_2}(k,k))$$ are both surjective. Hence their  composition is surjective.\\
  Now since we have the surjection $$H^0(\sO_{|\Q_2}(1,0))\otimes H^0(\sO_{|\Q_2}(0,1))\rightarrow H^0(\sO_{|\Q_2}(1,1))$$ we have that also $$H^0(F_{|\Q_2}(k-1,k-1))\otimes H^0(\sO_{|\Q_2}(1,1))\xrightarrow{\tau} H^0(F_{|\Q_2}(k,k)),$$ is a surjection.\\
  Since both $\sigma$ and $\tau$ are surjective, we can see as in \cite{m} page $100$ that $\mu$ is also surjective.\\

  Let assume $(1)$ and $(2)$ on $\Q_{2n-1}$. We prove it on $\Q_{2n}$.\\
  Let $F$ be an $m$-Qregular coherent sheaf on $\Q_{2n}$.
   We have the following exact sequence for any integer $k$:
  $$0\rightarrow F(k-1)\rightarrow F(k) \rightarrow F_{|Q_{2n-1}}(k)\rightarrow 0.$$
In particular we get
$$H^i(F(m-i)) \rightarrow H^i(F_{|Q_{2n-1}}(m-i))\rightarrow H^{i+1}(F(m-i-1)).$$
This implies that $H^i(F_{|\Q_{2n-1}}(m-i))=0$ for $i=1, \dots , 2n-2$.\\
Let us consider now the exact sequences on $\Q_{2n}$:
$$ 0\rightarrow F(m-2n+1)\otimes \Sigma_1(-1)\rightarrow F(m-2n+1)^{2^n} \rightarrow F(m-2n+1)\otimes\Sigma_2\rightarrow 0$$
and
$$ 0\rightarrow F(m-2n+1)\otimes \Sigma_2(-1)\rightarrow F(m-2n+1)^{2^n} \rightarrow F(m-2n+1)\otimes\Sigma_1\rightarrow 0.$$
Since $H^{2n-1}(F(m-2n+1))=0$  and $H^{2n}(F(m)\otimes \Sigma_*(-2n))=0$, we also have\\ $H^{2n-1}(F(m)\otimes \Sigma_*(-2n+1))=0$.\\
If we tensorize by $F(m)$ the exact sequence
$$0\rightarrow \Sigma_1(-2n)\rightarrow \Sigma_1(-2n+1) \rightarrow {\Sigma_1}_{|\Q_{2n-1}}(-2n+1)\rightarrow 0$$
we get
$$0\rightarrow F(m)\otimes \Sigma_1(-2n)\rightarrow F(m)\otimes\Sigma_1(-2n+1) \rightarrow F_{|\\Q_{2n-1}}(m)\otimes\Sigma_1(-2n+1)\rightarrow 0.$$
So we obtain
 $$H^{2n-1}(F(m)\otimes\Sigma_1(-2n+1)) \rightarrow H^{2n-1}(F_{|\Q_{2n-1}}(m)\otimes\Sigma_1(-2n+1))\rightarrow H^{2n}(F(m)\otimes\Sigma_1(-2n))$$ and hence $ H^{2n-1}(F_{|\\Q_{2n-1}}(m)\otimes\Sigma_1(-2n+1))=0$.\\
 We can conclude that if $F$ is  an $m$-Qregular sheaf on $\Q_{2n}$, then $F_{|\Q_{2n-1}}$ is an $m$-Qregular on $\Q_{2n-1}$.\\
 We have hence by the induction hypothesis $(1)$ and $(2)$ for $F_{|\Q_{2n-1}}$.\\
 Moreover from the exact sequence on $\Q_{2n-1}$:
 $$ 0\rightarrow F_{|\Q_{2n-1}}(m)\otimes \Sigma(-2n+1)\rightarrow F_{|\Q_{2n-1}}(m-2n+2)^{2^n} \rightarrow F_{|\Q_{2n-1}}(m+1)\otimes\Sigma(-2n+1)\rightarrow 0,$$
 we get
 $$ H^{2n-1}(F_{|\Q_{2n-1}}(m)\otimes \Sigma(-2n+1))\rightarrow H^{2n-1}(F_{|\Q_{2n-1}}(m-2n+2))^{2^n} \rightarrow$$ $$\rightarrow H^{2n-1}(F_{|\Q_{2n-1}}(m+1)\otimes\Sigma(-2n+1)).$$
 By $(1)$ for $F_{|\Q_{2n-1}}$, the last group is $0$; by $m$-Qregularity the first group is $0$. Therefore $H^{2n-1}(F_{|\Q_{2n-1}}(m-2n+2))=0$.\\
 Let us consider now the exact sequence
  $$H^i(F(m-i))\rightarrow H^i(F(m+1-i)) \rightarrow H^i(F_{|\Q_{2n-1}}(m+1-i)).$$
  By $(1)$ for $F_{|\\Q_{2n-1}}$ if $i=1, \dots 2n-2$, or by the above argument if $i=2n-1$, the last group is $0$; by $m$-Qregularity ,  the first is $0$ if $i=1, \dots 2n-1 $. Therefore $H^i(F(m+1-i))=0$ for $i=1, \dots 2n-1$.\\
  Moreover from the exact sequence
  $$H^{2n}(F(m)\otimes \Sigma(-2n))\rightarrow H^{2n}(F(m+1)\otimes\Sigma(-2n)) \rightarrow 0$$
  we also see that  $H^{2n}(F(m+1)\otimes\Sigma(-2n))=0$.\\
  We can conclude that $F$ is $(m+1)$-Qregular. Continuing in this way we prove $(1)$ for $F$.\\

  To get $(2)$ we consider the following diagramm:
$$  \begin{array}{ccc}
 H^0(F(k-1))\otimes H^0(\sO_{\Q_{2n}}(1))& \xrightarrow{\sigma}& H^0(F_{|\Q_{2n-1}}(k-1))\otimes H^0(\sO_{\Q_{2n-1}}(1))\\
 \downarrow \scriptsize{\mu}& &\downarrow \scriptsize{\tau}\\
 H^0(F(k))&\xrightarrow{\nu}& H^0(F_{|\Q_{2n-1}}(k))

 \end{array}$$
  Note that $\sigma$ is surjective if $k>m$ because $H^1(F(k-2))=0$.\\
 Moreover if $k>m$ also $\tau$ is surjective by $(2)$ for $F_{|\Q_{2n-1}}$.\\
  Since both $\sigma$ and $\tau$ are surjective we can see as in \cite{m} page $100$ that $\mu$ is also surjective.\\
  In a very similar way we can prove $(1)$ and $(2)$ from $\Q_{2n}$ to $\Q_{2n+1}$.

  \end{proof}
  \end{proposition}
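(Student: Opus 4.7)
The plan is to prove both statements simultaneously by induction on $n$, taking $\Q_2\cong\mathbb P^1\times\mathbb P^1$ as the base case. Here the Qregularity coincides with Hoffman--Wang's $(m,m)$-regularity (since $\Sigma_1\cong\sO(1,0)$ and $\Sigma_2\cong\sO(0,1)$), so both (1) and (2) follow from \cite{hw}. The inductive engine is restriction to a hyperplane section: for $F$ an $m$-Qregular sheaf on $\Q_n$, I would first show that $F_{|\Q_{n-1}}$ is $m$-Qregular on $\Q_{n-1}$, apply the induction hypothesis to the restriction, and then climb back up.

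For the descent to $\Q_{n-1}$, the workhorse is the restriction sequence
\[
0\to F(k-1)\to F(k)\to F_{|\Q_{n-1}}(k)\to 0.
\]
Chasing its long exact sequence at $k=m-i+1$ for $i=1,\dots,n-2$, the intermediate vanishings $H^{i}(F_{|\Q_{n-1}}(m-i))=0$ drop out at once from the hypothesis. The delicate point is the top vanishing $H^{n-1}(F_{|\Q_{n-1}}(m)\otimes\Sigma_\ast(-n+1))=0$. I would obtain it in two steps: tensor the tautological sequence $0\to\Sigma_\ast(-1)\to\sO^{N}\to\Sigma_\ast\to 0$ (with $N=2^{\lceil n/2\rceil}$) by $F(m-n+1)$ to deduce $H^{n-1}(F(m)\otimes\Sigma_\ast(-n+1))=0$ from the Qregularity of $F$, then tensor the restriction sequence $0\to\Sigma_\ast(-n)\to\Sigma_\ast(-n+1)\to(\Sigma_\ast)_{|\Q_{n-1}}(-n+1)\to 0$ by $F(m)$. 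Here one must pay careful attention to the fact that restricting a spinor bundle on an even quadric splits as the sum of the two spinors on the odd quadric below (and, passing from odd to even, the single spinor becomes a direct summand of a rank-two object). This forces the two parity cases $\Q_{2n-1}\mapsto\Q_{2n}$ and $\Q_{2n}\mapsto\Q_{2n+1}$ to be treated with slightly different exact sequences, and is the main technical obstacle.

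Having established Qregularity of $F_{|\Q_{n-1}}$, the induction hypothesis yields (1) and (2) for the restriction. I would then lift (1) back to $\Q_n$ by running the same restriction sequence twisted by $m+1-i$: the vanishings $H^{i}(F(m+1-i))=0$ for $i=1,\dots,n-1$ follow because both neighbours in the long exact sequence vanish (the left by hypothesis, the right by the induction hypothesis applied to $F_{|\Q_{n-1}}$), while $H^{n}(F(m+1)\otimes\Sigma_\ast(-n))=0$ follows from the surjection $H^{n}(F(m)\otimes\Sigma_\ast(-n))\twoheadrightarrow H^{n}(F(m+1)\otimes\Sigma_\ast(-n))$ in the long exact sequence. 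Iterating proves (1) for all $k\ge m$.

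For (2), I would use Mumford's diagram
\[
\begin{array}{ccc}
H^0(F(k-1))\otimes H^0(\sO_{\Q_n}(1))&\xrightarrow{\sigma}&H^0(F_{|\Q_{n-1}}(k-1))\otimes H^0(\sO_{\Q_{n-1}}(1))\\
\downarrow\mu&&\downarrow\tau\\
H^0(F(k))&\xrightarrow{\nu}&H^0(F_{|\Q_{n-1}}(k))
\end{array}
\]
For $k>m$: the map $\sigma$ is surjective because $H^1(F(k-2))=0$ by (1), the map $\tau$ is surjective by the inductive (2) applied to $F_{|\Q_{n-1}}$, and $\nu$ is surjective because $H^1(F(k-1))=0$. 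A standard diagram chase (exactly as in \cite{m}, page 100) then forces $\mu$ to be surjective, completing the induction.
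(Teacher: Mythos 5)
Your plan follows the paper's proof almost line for line: same induction on $n$ with the Hoffman--Wang base case on $\Q_2$, same descent to $\Q_{n-1}$ via the restriction sequence plus the two-step spinor argument for the top vanishing, same lift of (1) back to $\Q_n$, and the same Mumford square for (2). There is, however, one step where the justification you give does not actually cover what is needed, and it is precisely the step where the paper inserts an extra argument.

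In the lifting step you claim that in
$$H^i(F(m-i))\rightarrow H^i(F(m+1-i))\rightarrow H^i(F_{|\Q_{n-1}}(m+1-i))$$
the right-hand group vanishes for all $i=1,\dots,n-1$ ``by the induction hypothesis applied to $F_{|\Q_{n-1}}$.'' For $i\le n-2$ this is fine: $(m+1)$-Qregularity of the restriction (which is part (1) of the inductive hypothesis) contains exactly the condition $H^i(F_{|\Q_{n-1}}(m+1-i))=0$. But for $i=n-1$ the group in question is $H^{n-1}(F_{|\Q_{n-1}}(m-n+2))$, i.e.\ the \emph{top} cohomology of a plain twist of the restriction, and this is not one of the defining conditions of $k$-Qregularity on $\Q_{n-1}$ for any $k$ (the top-degree condition there always involves a tensor by $\Sigma_*(-n+1)$). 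So the induction hypothesis alone does not give you this vanishing, and at this point in the paper the equivalent characterization of Proposition \ref{d-e} (which would give it) is not yet available, since its proof uses the present proposition. The paper closes this hole with a separate argument: twist the tautological sequence $0\to\Sigma(-1)\to\sO^{N}\to\Sigma\to 0$ on $\Q_{n-1}$ by $F_{|\Q_{n-1}}(m-n+2)$, so that $H^{n-1}(F_{|\Q_{n-1}}(m-n+2))^{N}$ sits between $H^{n-1}(F_{|\Q_{n-1}}(m)\otimes\Sigma(-n+1))$ (zero by $m$-Qregularity of the restriction) and $H^{n-1}(F_{|\Q_{n-1}}(m+1)\otimes\Sigma(-n+1))$ (zero by $(m+1)$-Qregularity of the restriction, i.e.\ by part (1) of the induction hypothesis). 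You need to add this step; everything else in your outline matches the paper's argument.
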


  We can give some equivalent definition of $m$-Qregular coherent sheaves:\\
  \begin{proposition}\label{d-e} Let $F$ be a coherent sheaf on $\Q_n$. 
  The following conditions are equivalent:\\
   \begin{enumerate}
   \item $F$ is $m$-Qregular.
   \item $H^i(F(m-i))=0$ for $i=1,\dots, n-1$, $H^{n-1}(F(m)\otimes \Sigma_*(-n+1))=0$ and\\
    $H^n(F(m-n+1))=0$.\\
   \end{enumerate}
   \begin{proof}
   We look at the exact sequences
   $$ 0\rightarrow F(k)\otimes \Sigma_*(-1)\rightarrow F(k)^{2^{([n/2]+1)}} \rightarrow F(k)\otimes\Sigma_*\rightarrow 0.$$
   $(1)\Rightarrow (2).$   Let $F$ be $m$-Qregular  then by (\ref{p1}) is also $(m+1)$-Qregular.
    We see that $$H^n(F(m+1)\otimes \Sigma_*(-n-1))=H^n(F(m+1)\otimes \Sigma_*(-n))=0\Rightarrow H^n(F(m+1-n))=0$$
    and
    $$H^n(F(m+1)\otimes \Sigma_*(-n-1))=H^{n-1}(F(m+1-n))=0\Rightarrow H^{n-1}(F(m+1)\otimes \Sigma_*(-n))=0.$$
    So we have $(2)$.\\
    $(2)\Rightarrow (1).$ Let $F$ be a coherent sheaf which satisfies ($2$).
     From
    $$H^{n-1}(F(m)\otimes \Sigma_*(-n+1))= H^{n}(F(m-n+1))=0\Rightarrow H^{n}(F(m)\otimes\Sigma_*(-n))=0,$$
    we see that $F$ is $m$-regular.
   \end{proof}
   \end{proposition}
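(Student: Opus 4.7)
The plan is to derive both implications from the single defining short exact sequence of the spinor bundle
\[
0 \to \Sigma_*(-1) \to \sO_{\Q_n}^{2^{([n/2]+1)}} \to \Sigma_* \to 0
\]
(the very sequence already used throughout the proof of Proposition \ref{p1}). Twisting by $-n$ and tensoring with $F(m+1)$ produces
\[
0 \to F(m+1)\otimes\Sigma_*(-n-1) \to F(m-n+1)^{2^{([n/2]+1)}} \to F(m+1)\otimes\Sigma_*(-n) \to 0,
\]
and the crucial observation is that the outer terms rewrite as $F(m)\otimes\Sigma_*(-n)$ and $F(m)\otimes\Sigma_*(-n+1)$ respectively, once the twist $\sO(1)$ is absorbed. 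Consequently the long exact sequence in cohomology ties together precisely the three groups that distinguish (1) from (2), namely $H^n(F(m)\otimes\Sigma_*(-n))$, $H^n(F(m-n+1))$, and $H^{n-1}(F(m)\otimes\Sigma_*(-n+1))$.

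For $(1)\Rightarrow(2)$, I would first promote $m$-Qregularity to $(m+1)$-Qregularity by invoking Proposition \ref{p1}, so that the extra vanishing $H^n(F(m+1)\otimes\Sigma_*(-n))=0$ is available alongside those encoded in (1). Then two adjacent pieces of the long exact sequence suffice. In the piece with $H^n(F(m-n+1))^{2^{([n/2]+1)}}$ in the middle, the left end vanishes by $m$-Qregularity and the right end by $(m+1)$-Qregularity, forcing the middle to vanish. In the piece with $H^{n-1}(F(m+1)\otimes\Sigma_*(-n))$ in the middle, the left end vanishes by the $i=n-1$ case of $m$-Qregularity and the right end again by $m$-Qregularity, yielding $H^{n-1}(F(m)\otimes\Sigma_*(-n+1))=0$. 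The vanishings $H^i(F(m-i))=0$ for $i<n-1$ are common to (1) and (2).

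For $(2)\Rightarrow(1)$, I would read the same sequence, now arranged with $F(m)$ in the middle as
\[
0 \to F(m)\otimes\Sigma_*(-n) \to F(m-n+1)^{2^{([n/2]+1)}} \to F(m)\otimes\Sigma_*(-n+1) \to 0.
\]
The segment $H^{n-1}(F(m)\otimes\Sigma_*(-n+1)) \to H^n(F(m)\otimes\Sigma_*(-n)) \to H^n(F(m-n+1))^{2^{([n/2]+1)}}$ has both outer groups equal to zero by hypothesis (2), so the middle group vanishes, which is exactly the spinor cohomology vanishing required by (1). Again the remaining vanishings are shared.

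The argument involves no new idea — it is purely an exercise in the long exact sequence attached to the defining sequence of $\Sigma_*$. The main subtleties are clerical: one must recognize the sheaf identifications $F(m+1)\otimes\Sigma_*(-n-1)\cong F(m)\otimes\Sigma_*(-n)$ and $F(m+1)\otimes\Sigma_*(-n)\cong F(m)\otimes\Sigma_*(-n+1)$, and one must remember to pass to $(m+1)$-Qregularity via Proposition \ref{p1} in the forward direction so that all four vanishings one needs at a given step are simultaneously available.
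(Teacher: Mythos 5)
Your proposal is correct and follows essentially the same route as the paper: the same defining sequence of $\Sigma_*$, twisted so that its outer terms become $F(m)\otimes\Sigma_*(-n)$ and $F(m)\otimes\Sigma_*(-n+1)$ with $F(m-n+1)^{\oplus}$ in the middle, the same promotion to $(m+1)$-Qregularity via Proposition~\ref{p1} in the forward direction, and the same three segments of the long exact sequence. The only difference is expository (you make the sheaf identifications and the choice of twist explicit, which the paper leaves implicit).
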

   Now we show that the Qregular coherent sheaf are globally generated:
   \begin{proposition} Any Qregular coherent sheaf $F$ on $\Q_n$ is globally generated.\end{proposition}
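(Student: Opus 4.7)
The plan is to imitate Mumford's classical argument from \cite{m} using Proposition \ref{p1}(2) as the essential ingredient. Let $F$ be a Qregular coherent sheaf on $\Q_n$ and write $\phi : H^0(F) \otimes \sO_{\Q_n} \to F$ for the evaluation map, with cokernel $C$; the goal is to prove $C=0$.

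The first step will be to upgrade part (2) of Proposition \ref{p1} to the assertion that
$$H^0(F) \otimes H^0(\sO_{\Q_n}(k)) \longrightarrow H^0(F(k))$$
is surjective for every $k \geq 1$. Induction on $k$, starting from the case $k=1$ (which is exactly Proposition \ref{p1}(2) with $m=0$), shows that $H^0(F(k))$ is spanned by products of a section of $F$ with $k$ sections of $\sO_{\Q_n}(1)$. Projective normality of $\Q_n \subset \mathbb P^{n+1}$ then ensures that $H^0(\sO_{\Q_n}(1))^{\otimes k}$ already surjects onto $H^0(\sO_{\Q_n}(k))$, so the displayed map is onto.

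Next I would split the four-term sequence $0 \to \ker\phi \to H^0(F)\otimes\sO_{\Q_n} \to F \to C \to 0$ into two short exact sequences, twist by $\sO_{\Q_n}(k)$ with $k$ large enough that Serre vanishing kills every higher cohomology group in sight, and chase the resulting long exact sequences of global sections. This produces an identification
$$H^0(C(k)) \;\cong\; \operatorname{coker}\!\Bigl( H^0(F) \otimes H^0(\sO_{\Q_n}(k)) \to H^0(F(k)) \Bigr)$$
for $k \gg 0$. By the previous step the right-hand cokernel vanishes, and hence $H^0(C(k))=0$ for all sufficiently large $k$. But if $C \neq 0$, Serre's theorem guarantees that $C(k)$ is globally generated for $k\gg 0$, so in particular $H^0(C(k))\neq 0$ — a contradiction. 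Therefore $C=0$, which is exactly the statement that $F$ is generated by its global sections.

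The essentially only technical point is the iterated spanning property, which ultimately rests on projective normality of the smooth quadric; beyond that the argument is formal. In particular, no further reduction to a hyperplane section is needed here, since Proposition \ref{p1} has already absorbed the induction on dimension.
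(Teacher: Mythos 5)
Your proof is correct, but it follows a genuinely different route from the paper's. You run Mumford's classical argument directly on the quadric: iterating Proposition \ref{p1}(2) gives that $H^0(F(k))$ is spanned by the image of $H^0(F)\otimes H^0(\sO_{\Q_n}(1))^{\otimes k}$, projective normality of $\Q_n\subset\mathbb P^{n+1}$ upgrades this to surjectivity of $H^0(F)\otimes H^0(\sO_{\Q_n}(k))\to H^0(F(k))$ for all $k\ge 1$, and then the Serre-vanishing comparison at $k\gg 0$ identifies this cokernel with $H^0(C(k))$ and forces the cokernel $C$ of evaluation to vanish; each of these steps checks out. The paper instead avoids the asymptotic detour entirely: it tensors the evaluation map with $\mathrm{id}_\Sigma$, uses that $F$ is a direct summand of $F\otimes\Sigma\otimes\Sigma^{\vee}$ (the trace splitting of $\Sigma\otimes\Sigma^{\vee}$), and reduces the claim to two surjectivities in a small commutative square, namely $H^0(F)\otimes H^0(\Sigma)\to H^0(F\otimes\Sigma)$ (which follows from $H^1(F\otimes\Sigma_*(-1))=0$, i.e.\ from Proposition \ref{d-e}) and the global generation of $F\otimes\Sigma$ (obtained from its Castelnuovo--Mumford regularity). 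Your version is more elementary and self-contained --- it needs no spinor bundles and no regularity statement for $F\otimes\Sigma$, only Proposition \ref{p1}(2) plus standard Serre vanishing --- while the paper's version is a finite diagram chase that trades that for reliance on the auxiliary bundle $\Sigma$ and on the equivalences of Proposition \ref{d-e}. Either argument is acceptable; if you keep yours, do state explicitly that the multiplication map factors through $H^0(\mathrm{im}\,\phi(k))$ so that the identification of the cokernel with $H^0(C(k))$ is airtight.
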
\begin{proof} We need to prove that the evaluation map  $$\varphi:H^0(F)\otimes\sO_{Q_n}\to F$$ is surjective. This is equivalent to prove that its tensor product with $id_\Sigma$ is surjective, because this would imply that$$\varphi\otimes id_\Sigma\otimes id_{\Sigma^\vee}:H^0(F)\otimes\Sigma\otimes\Sigma^\vee\to F\otimes\Sigma\otimes\Sigma^\vee$$ is surjective, and restricting to the component of endomorphims of $\Sigma$ of trace zero we get that$\varphi$ is surjective. We thus observe that we have a commutative diagram
    $$\begin{matrix}H^0(F)\otimes H^0(\Sigma)\otimes\sO_{Q_n}&\xrightarrow{\eta}&H^0(F\otimes\Sigma)\otimes\sO_{Q_n}\\
    \downarrow&&\downarrow\psi\\H^0(F)\otimes\Sigma&\xrightarrow{\varphi\otimes id_\Sigma}&F\otimes\Sigma\end{matrix}$$so that it is enough to prove that $\eta$ and $\psi$ are surjective. This follows from Proposition
    \ref{d-e}
     which implies that $H^1(F\otimes\Sigma_*(-1)=0$ (hence $\eta$ is surjective) and more generally $F\otimes\Sigma$ is regular, (and hence it is globally generated and $\psi$ is surjective).\end{proof}

  \section{Qregularity on $\Q_n$}
  \begin{definition}Let $F$ be a coherent sheaf on $\Q_n$. We define the Qregularity of $F$, $Qreg (F)$, as the least integer $m$ such that $F$ is $m$-Qregular. We set $Qreg (F)=-\infty$ if there is no such integer.
  \end{definition}
  \begin{remark}\label{r1} On $\Q_n$, we show that $Qreg (\sO)=Qreg(\Sigma_*)=0$.\\
  $\sO$ and $\Sigma_*$ are ACM bundles.\\
   $H^n(\sO\otimes\Sigma_*(-n))\cong H^0(\Sigma^{\vee})=H^0(\Sigma(-1))=0$ and \\
   $H^n(\Sigma_*\otimes\Sigma_*(-n))\cong H^0(\Sigma_*\otimes\Sigma_*(-2))=0$.\\
   So $\sO$ and $\Sigma_*$ are $0$-Qregular.\\
   $H^n(\sO(-1)\otimes\Sigma_*(-n))\cong H^0(\Sigma^{\vee}(1))=H^0(\Sigma)\not=0$ implies that $\sO$ is not $(-1)$-Qregular.\\
   $H^n(\Sigma_*(-1)\otimes\Sigma_*(-n))\cong H^0(\Sigma^{\vee}_*(1)\otimes\Sigma_*(-1))$.\\
   Now from the exact sequence
$$ 0\rightarrow \Sigma_*^\vee\otimes \Sigma_*(-1)\rightarrow \Sigma_*^\vee\otimes\sO^{2^n} \rightarrow \Sigma_*^\vee\otimes\Sigma_*\rightarrow 0,$$ since $H^0(\Sigma_*^\vee)=H^1(\Sigma_*^\vee)$ we see that
   $H^0(\Sigma^{\vee}_*(1)\otimes\Sigma_*(-1))\cong H^1(\Sigma_*(-1)\otimes\Sigma_*^{\vee})$.\\
   But $H^1(\Sigma_*(-1)\otimes \Sigma_*^{\vee})\not=0$ by \cite{cm3} Lemma $4.1.$ This means that $\Sigma_*$ is not $(-1)$-Qregular.\\
   \end{remark}
   \begin{remark}Let $$0\rightarrow F_1\rightarrow F_2\rightarrow F_3\rightarrow 0$$ be an exact sequence of coherent sheaves.\\
   Then $Qreg (F_2)\leq \max \{Qreg(F_1),Qreg(F_3)\}$.\\
   Let $F$ and $G$ be coherent sheaves.\\
   Then  $Qreg (F\oplus G)= \max \{Qreg(F),Qreg(G)\}$.\\
   \end{remark}
   Let $F$ be a coherent sheaf on $\Q_n$ ($n>1$) and let $i_* F$ be its extension by zero in the embedding $\Q_n\hookrightarrow \mathbb P^{n+1}$. We can compare the Qregularity of $F$ with the regularity in the sense of Castelnuovo-Mumford of $i_*F$. We recall the definition:\\
   \begin{definition} A  coherent sheaf $F$ on $\Q_n$  is said $m$-regular in the sense of Castelnuovo-Mumford if $H^i(\mathbb P^{n+1},i_* F(m-i))=0$ for $i=1\dots n+1$.\\
   $Reg (F)$ is the least integer $m$ such that $F$ is $m$-regular. We set $reg (F)=-\infty$ if there is no such integer.
   \end{definition}
   \begin{proposition}\label{p2}Let $\Q_n\hookrightarrow \mathbb P^{n+1}$ be a quadric hypersurface ($n>1$). For any coherent sheaf $F$ we have:\\
   $$Qreg(F)\leq Reg(i_* F)\leq Qreg(F)+1.$$
   \begin{proof}
   We have to prove that:\\
   $F$ $m$-regular $\Rightarrow$ $F$ $m$-Qregular and\\
   $F$ $(m-1)$-Qregular $\Rightarrow$ $F$ $m$-regular.\\
   For any integer $t$ and for any $i>0$ we have:
   $$H^i(\mathbb P^{n+1},i_* F(t))=H^i(\Q_n,F(t)),$$
   so $F$ is $m$-regular if and only if $H^i(\Q_n,F(m-i))=0$ for $i=1\dots n$.\\
   Let $F$ be $m$-regular we only need to prove that $H^n(\Q_n,F(m)\otimes \Sigma_*(-n))=0$.\\
   From the exact sequence $$ 0\rightarrow F(m)\otimes \Sigma_*(-n-1)\rightarrow F(m-n)^{2^{([n/2]+1)}} \rightarrow F(m)\otimes\Sigma_*(-n)\rightarrow 0,$$ we see that $H^n(\Q_n,F(m-n))=0\Rightarrow H^n(F(m)\otimes\Sigma_*(-n))=0$.\\
   Let $F$ be $(m-1)$-Qregular, by (\ref{p1}) $F$ is $m$-Qregular, so we only need to prove that\\ $H^n(\Q_n,F(m-n))=0$.\\
   From the exact sequence $$ 0\rightarrow F(m-1)\otimes \Sigma_*(-n)\rightarrow F(m-n)^{2^{([n/2]+1)}} \rightarrow F(m)\otimes\Sigma_*(-n)\rightarrow 0,$$ we see that $H^n(F(m-1)\otimes\Sigma_*(-n))=H^n(F(m)\otimes\Sigma_*(-n))=0\Rightarrow H^n(\Q_n,F(m-n))=0$.\\
  \end{proof}\end{proposition}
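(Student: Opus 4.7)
My plan is to establish the double inequality by first translating both sides into explicit cohomological vanishings on $\Q_n$ itself, and then bridging the two formulations via the tautological spinor sequence. The first step I would take is to note that, since $i\colon\Q_n\hookrightarrow\mathbb P^{n+1}$ is a closed embedding, $i_*$ is exact and preserves cohomology, so $H^j(\mathbb P^{n+1}, i_*F(t))=H^j(\Q_n,F(t))$ for every $j$ and $t$. Since $\dim\Q_n=n$, the potential condition $H^{n+1}(i_*F(m-n-1))=0$ is automatic, and therefore $i_*F$ is $m$-regular if and only if $H^i(F(m-i))=0$ for $i=1,\dots,n$. Comparing with the definition of $m$-Qregularity, the first $n-1$ conditions coincide; the two notions differ only in how the vanishing in cohomological degree $n$ is phrased.

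For the inequality $Qreg(F)\leq Reg(i_*F)$, I would assume $i_*F$ is $m$-regular and try to promote this to $m$-Qregularity. The only non-trivial step is to show $H^n(F(m)\otimes\Sigma_*(-n))=0$. To do this I would start from the tautological sequence $0\to \Sigma_*(-1)\to \sO^{2^{[n/2]+1}}\to \Sigma_*\to 0$, twist by $\sO(-n)$ and tensor with $F(m)$, obtaining
$$0\to F(m)\otimes\Sigma_*(-n-1)\to F(m-n)^{2^{[n/2]+1}}\to F(m)\otimes\Sigma_*(-n)\to 0.$$
In the associated long exact sequence, the $H^n$ of the middle term vanishes by the $i=n$ condition of $m$-regularity, while $H^{n+1}$ of the left term vanishes for dimension reasons. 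The desired vanishing on the right then follows.

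For the reverse inequality $Reg(i_*F)\leq Qreg(F)+1$, I would assume $F$ is $(m-1)$-Qregular, so by Proposition \ref{p1} it is also $m$-Qregular. This already gives $H^i(F(m-i))=0$ for $i=1,\dots,n-1$, and the only condition still needed for $m$-regularity of $i_*F$ is $H^n(F(m-n))=0$. Here I would again apply the same twisted tautological sequence, rewriting its left-hand term via $F(m)\otimes\Sigma_*(-n-1)=F(m-1)\otimes\Sigma_*(-n)$: the $H^n$ of that term vanishes by $(m-1)$-Qregularity, while $H^n(F(m)\otimes\Sigma_*(-n))$ vanishes by $m$-Qregularity, so the middle $H^n(F(m-n))^{2^{[n/2]+1}}$ is squeezed to zero in the long exact sequence.

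I do not expect any step to be a genuine obstacle; the only real care is bookkeeping the twists in the spinor tautological sequence, and in even dimension remembering that $\Sigma_1$ and $\Sigma_2$ interchange in that sequence. Both of these issues are purely notational and the unified notation $\Sigma_*$ absorbs them without affecting the cohomological computation.
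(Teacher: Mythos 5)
Your proposal is correct and follows essentially the same route as the paper: the identification $H^i(\mathbb P^{n+1},i_*F(t))=H^i(\Q_n,F(t))$, the twisted tautological spinor sequence to deduce $H^n(F(m)\otimes\Sigma_*(-n))=0$ from $H^n(F(m-n))=0$ in one direction, and the squeeze of $H^n(F(m-n))$ between the two Qregularity vanishings (after invoking Proposition \ref{p1}) in the other. The only difference is that you make explicit the dimension-reason vanishing of $H^{n+1}$ and the $\Sigma_1\leftrightarrow\Sigma_2$ interchange, which the paper leaves implicit.
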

  \begin{remark} The above Proposition is optimal because for instance $Qreg (\sO)=Qreg(\Sigma_*)=0$ by Remark \ref{r1} but $Reg (i_*\sO)=1$ and $Reg(i_*\Sigma_*)=0$ on $\Q_n$ ($n>2$).\\
  In fact $$H^n(\mathbb P^{n+1},i_* \sO(t-n))=H^n(\Q_n,\sO(t-n))=0$$ if and only if $t\geq 1$ and\\
  $$H^n(\mathbb P^{n+1},i_* \Sigma_*(t-n))=H^n(\Q_n,\Sigma_*(t-n))=0$$ if and only if $t\geq 0$.
  \end{remark}
  We are ready to classify the coherent sheaves with Qregularity $-\infty$:\\
  \begin{theorem}
Let $F$ be a coherent sheaf on $\Q_n$ ($n$ even).\\ The following  condition are equivalent:\\
\begin{enumerate}
\item $Qreg (F)=-\infty$.\\
\item $Reg (F)=-\infty$.\\
\item $\mbox{Supp}(F)$ is finite.\\
\end{enumerate}
Let $F$ be a coherent sheaf on $\Q_n$ ($n$ odd). Let us consider the geometric collection on $\Q_n$: $$\sigma= (\sO,\dots , \sO(n-1), \Sigma(-n-1)).$$ The following  condition are equivalent:\\
\begin{enumerate}
\item $Qreg (F)=-\infty$.\\
\item $Reg (F)=-\infty$.\\
\item $Reg_{\sigma} (F)=-\infty$.\\
\item $\mbox{Supp}(F)$ is finite.\\
\end{enumerate}

\begin{proof}
Let $n$ be an even integer and let $F$ be a coherent sheaf on $\Q_n$.\\
By (\ref{p2})  $Qreg (F)=-\infty$ if and only if $Reg (F)=-\infty$.\\
By \cite{bm} Theorem $1.$  $Reg (F)=-\infty$ if and only if $\mbox{Supp}(F)$ is finite.\\

Let $n$ be an odd integer and let $F$ be a coherent sheaf on $\Q_n$.\\
By (\ref{p2})  $Qreg (F)=-\infty$ if and only if $Reg (F)=-\infty$.\\
By \cite{cm1} Theorem $4.3.$  $Reg (F)=-\infty$ if and only if $Reg_{\sigma} (F)=-\infty$.\\
By \cite{bm} Theorem $1.$  $Reg_{\sigma} (F)=-\infty$ if and only if $\mbox{Supp}(F)$ is finite.\\

\end{proof}
\end{theorem}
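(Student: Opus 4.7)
The plan is to deduce the theorem from two ingredients already in place: the sandwich inequality $Qreg(F) \leq Reg(F) \leq Qreg(F) + 1$ of Proposition \ref{p2}, and the known classification of coherent sheaves on projective space whose Castelnuovo--Mumford regularity equals $-\infty$.

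First I would establish the equivalence $(1) \Leftrightarrow (2)$ in both parity cases. By Serre's theorem every coherent sheaf on $\Q_n$ is $m$-Qregular for $m \gg 0$, and by Proposition \ref{p1}(1) the set of such $m$ is up-closed; thus $Qreg(F) = -\infty$ is equivalent to $F$ being $m$-Qregular for every $m \in \Z$, with the analogous statement for $Reg$. The two inequalities of Proposition \ref{p2} then yield immediately that one quantity equals $-\infty$ precisely when the other does.

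Next I would handle the equivalence with the finite-support condition. The easy implication $\mathrm{Supp}(F)$ finite $\Rightarrow Reg(F) = -\infty$ is direct: if $i_*F$ is zero-dimensional on $\mathbb P^{n+1}$ then all higher cohomology of its twists vanishes, so every integer $m$ trivially satisfies the $m$-regularity conditions. For the converse --- that vanishing of $H^i(i_*F(m-i))$ for all $m$ and all $i \geq 1$ forces $\dim \mathrm{Supp}(F) = 0$ --- I would invoke \cite{bm} Theorem $1$ applied to $i_*F$ on $\mathbb P^{n+1}$. This is the substantive content and the main obstacle, but it is available off the shelf.

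Finally, for odd $n$, I would insert the additional equivalence with $Reg_\sigma(F) = -\infty$ for the geometric collection $\sigma = (\sO, \dots, \sO(n-1), \Sigma(-n-1))$. By \cite{cm1} Theorem $4.3$, for any coherent sheaf on $\Q_n$ the Castelnuovo--Mumford regularity has the $-\infty$ property if and only if the $\sigma$-regularity does, which completes the cycle of equivalences and proves both halves of the theorem.
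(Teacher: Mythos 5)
Your proposal is correct and follows essentially the same route as the paper: the sandwich inequality of Proposition \ref{p2} gives $(1)\Leftrightarrow(2)$, the result of \cite{bm} (Theorem 1) gives the equivalence with finite support, and \cite{cm1} Theorem 4.3 supplies the extra link to $Reg_\sigma$ in the odd case. The only difference is that you spell out why the sandwich inequality transfers the $-\infty$ property and note the easy direction of the support implication explicitly, which the paper leaves implicit.
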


\section{Evans-Griffiths criterion  on quadrics}
 We use our notion of regularity in order to  proving our main result:

{\emph {Proof of Theorem \ref{t1}.}} Let assume that $E$ is Qregular but $E(-1)$ not.\\Here we use the definition of Qregularity as in Remark \ref{d-e}.\\
Since $E$ is Qregular, it is globally generated and $E(1)$ is ample. So, by Le Potier vanishing theorem, we have that $H^i(E^{\vee}(-l))=0$ for every $l>0$ and $i=1, \dots, n-r$.\\
 So by Serre duality $H^i(E(-n+l))=0$ for every $l>0$ and $i=r, \dots, n-1$.\\
  In particular $H^i(E(-1-i))=0$ for $i=r, \dots, n-2$ and by hypothesis   $H^i(E(-1-i))=0$ for $i=1, \dots, n-r$ and  $H^{n-1}(E(-1-n+1) =0$.\\ We can conclude that $E(-1)$ is not Qregular if and only if   $H^{n-1}(E(-1)\otimes \Sigma_*(-n+1))\not=0$ or $H^n(E(-1-n+1))\not=0$.\\Let assume first that $H^n(E(-1-n+1))\not=0$, this means by Serre duality that $H^0(E^{\vee})\not=0$. We have a non zero map $$  f:E\rightarrow \sO$$ Now, since $E$ is globally generated, we have the exact sequence
 $$ \sO\rightarrow \sO\otimes H^0(E) \rightarrow E\rightarrow \sO.$$
 The composition of the maps is not zero so must be the identity and we can conclude that $\sO$ is a direct summand of $E$.\\
      Let assume now that $H^{n-1}(E(-1)\otimes \Sigma_*(-n+1))\not=0$ and $H^n(E(-1-n+1))=0$.\\
      Let see first the even case: let $n=2m$ and $H^{n-1}(E(-1)\otimes \Sigma_1(-n+1))\not=0$.\\
       We consider the following exact sequences:  $$ 0\rightarrow E(k)\otimes \Sigma_2(-1)\rightarrow E(k)^{2^{([n/2]+1)}} \rightarrow E(k)\otimes\Sigma_1\rightarrow 0.$$Since $ H^n(E(-1-n+1))= H^{n-1}(E(-1-n+1))=0$, we see that  $$H^{n-1}(E(-1)\otimes \Sigma_1(-n+1))\cong H^{n}(E(-1)\otimes \Sigma_2(-n))$$ so, by Serre duality  $H^0(E^{\vee}(1)\otimes \Sigma^{\vee}_2)\not=0$ and there exists a non zero map$$  f: E(-1)\rightarrow \Sigma^{\vee}_2.$$
      On the other hand, since for any $j=1, \dots, n-1$  $H^j(E(-1-j))=0$, the following maps$$ H^0(E\otimes \Sigma_2(-1))  \rightarrow    H^{1}(E\otimes \Sigma_1(-2)) \rightarrow  \dots  \rightarrow  H^{n-2}(E\otimes \Sigma_2(-n+1)) \rightarrow  H^{n-1}(E\otimes \Sigma_1(-n))$$ are all surjective.\\ In particular we have that $H^0(E(-1)\otimes\Sigma_2) \not=0$ and there exists a non zero map$$  g: \Sigma^{\vee}_2 \rightarrow  E(-1) .$$ Let us consider the following commutative diagram:$$\begin{array}{ccc} H^{n-1}(E(-1)\otimes \Sigma_1(-n+1)) \otimes H^{1}(E^{\vee}(1)\otimes \Sigma^{\vee}_1(-1))& \xrightarrow{\sigma}& H^{n}(\Sigma_1^{\vee}(-1)\otimes \Sigma_1(-n+1))\cong\mathbb C\\ \downarrow & &\downarrow \\H^0(E(-1)\otimes \Sigma_2)  \otimes H^{1}(E^{\vee}(1)\otimes \Sigma^{\vee}_1(-1))& \xrightarrow{\mu}& H^{1}(\Sigma_1^{\vee}(-1)\otimes \Sigma_2)\cong\mathbb C\\\downarrow & &\downarrow \\
      H^0(E(-1)\otimes \Sigma_2)  \otimes H^{0}(E^{\vee}(1)\otimes \Sigma^{\vee}_2)& \xrightarrow{\tau}& H^{0}(\Sigma_2^{\vee}\otimes \Sigma_2)\cong\mathbb C\\
    \uparrow{\cong}&&\uparrow{\cong}\\
     {Hom}(\Sigma_2^\vee,E(-1))\otimes{Hom}(E(-1),\Sigma_2^\vee)&\xrightarrow{\gamma}&{Hom}(\Sigma_2^\vee,\Sigma_2^\vee)    \end{array}$$The map $\sigma$ comes from Serre duality and it is not zero, the right vertical map are isomorphisms and the left vertical map are surjective so also the map $\tau$ is not zero.\\  This means that the composition of the maps $f$ and $g$ is not zero so must be the identity and we can conclude that $\Sigma^{\vee}_2$ is a direct summand of $E(-1)$.\\
     By \cite{o1} Theorem $2.8.$ we have
     $$\Sigma^{\vee}_2\cong
     \left\{\begin{array}{cc}
     \Sigma_2(1)& \textrm{if $m\equiv 0$ (mod $4$)}\\
      \Sigma_1(1)& \textrm{if $m\equiv 2$ (mod $4$)}\end{array}\right.$$
      In the same way we can prove that, if $H^{n-1}(E(-1)\otimes \Sigma_2(-n+1))\not=0$,  $\Sigma^{\vee}_1$ is a direct summand of $E(-1)$; or in the odd case that $\Sigma^{\vee}$ is a direct summand of $E(-1)$.\\
      By iterating these arguments we have that $E$ is a direct sum of line bundles and spinor bundles with some twist.\qquad
      As a Corollary we get the following splitting criterion:
       \begin{corollary} Let $E$ be a rank $r$ vector bundle on $\Q_n$ such that $H^i_*(E) =0$ for any $i=1, \dots, r-1$,  $H^{n-1}_*(E) =0$  and $H^{n-1}_*(E\otimes \Sigma_*)=0$, then $E$ is a direct sum of line bundles.
       Let $E$ be a rank $r$ vector bundle on $\Q_n$. Then the following conditions are equivalents:
\begin{enumerate}
\item $H^i_*(E) =0$ for any $i=1, \dots, r-1$, $H^{n-1}_*(E) =0$ and $H^{n-1}_*(E\otimes \Sigma_*)=0$,
\item $E$ is a direct sum of line bundles with some twist.
\end{enumerate} \end{corollary}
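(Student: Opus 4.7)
The strategy is to invoke Theorem~\ref{t1} and then use the extra hypothesis to rule out spinor summands. The vanishing hypotheses $H^i_*(E)=0$ for $i=1,\dots,r-1$ together with $H^{n-1}_*(E)=0$ are precisely those of Theorem~\ref{t1}, so that result yields a decomposition
$$E\;\cong\;\bigoplus_i \sO(a_i)\;\oplus\;\bigoplus_j \Sigma_*(b_j).$$
The reverse implication $(2)\Rightarrow(1)$ is immediate: a direct sum of line bundles has no intermediate cohomology on $\Q_n$, and in particular satisfies the three stated vanishings. So the work is concentrated in $(1)\Rightarrow(2)$, where one must show that the additional condition $H^{n-1}_*(E\otimes\Sigma_*)=0$ forces the family $\{\Sigma_*(b_j)\}$ to be empty.

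Suppose toward contradiction that some $\Sigma_*(b)$ occurs as a direct summand of $E$. Then for every $k\in\Z$ the sheaf $\Sigma_*(b)\otimes\Sigma_*(k)$ is a direct summand of $E\otimes\Sigma_*(k)$, so it is enough to exhibit one $k$ for which $H^{n-1}(\Sigma_*(b)\otimes\Sigma_*(k))\neq 0$: the resulting nonvanishing inside $H^{n-1}(E\otimes\Sigma_*(k))$ contradicts the hypothesis $H^{n-1}_*(E\otimes\Sigma_*)=0$. Serre duality on $\Q_n$ (where $\omega_{\Q_n}=\sO(-n)$) gives
$$H^1\bigl(\Sigma_*(-1)\otimes \Sigma_*^\vee\bigr)^\vee\;\cong\;H^{n-1}\bigl(\Sigma_*\otimes \Sigma_*^\vee(1-n)\bigr),$$
and the left-hand side is nonzero by Remark~\ref{r1}. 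Using the self-duality $\Sigma^\vee\cong\Sigma(1)$ (in the odd case) or the analogous isomorphism of each $\Sigma_i^\vee$ with $\Sigma_1(1)$ or $\Sigma_2(1)$ recalled from \cite{o1} in the proof of Theorem~\ref{t1} (even case), the right-hand side is rewritten as a nonvanishing $H^{n-1}$ of $\Sigma_*\otimes\Sigma_*(2-n)$. Choosing $k$ with $b+k=2-n$ produces the desired contradiction.

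The main point of care is the bookkeeping in the even case: there $H^{n-1}_*(E\otimes\Sigma_*)=0$ is shorthand for simultaneous vanishing against both $\Sigma_1$ and $\Sigma_2$, and the identification of $\Sigma_i^\vee$ may land in either $\Sigma_1(1)$ or $\Sigma_2(1)$ depending on $n/2\bmod 4$. In each of the four case combinations, the Serre-duality computation above still produces nonvanishing of $H^{n-1}$ against one of the two spinor bundles in some twist, so the assumed vanishing is contradicted in every case. This index-tracking is the only nuisance; the substantive splitting input comes entirely from Theorem~\ref{t1} and from the nonvanishing in Remark~\ref{r1}. Once spinor summands are excluded, $E$ is a direct sum of line bundles with twists, completing the equivalence.
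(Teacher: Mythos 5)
The paper offers no proof of this corollary at all --- it is stated as an immediate consequence of Theorem~\ref{t1} --- and your argument supplies exactly the intended one: apply Theorem~\ref{t1}, then use $H^{n-1}_*(E\otimes\Sigma_*)=0$ together with the Serre-dual of the nonvanishing $H^1(\Sigma_*(-1)\otimes\Sigma_*^\vee)\neq 0$ from Remark~\ref{r1} to exclude spinor summands. Your treatment is correct (the index bookkeeping you flag is genuinely needed, e.g.\ on $\Q_2$ only one of the two pairings $\Sigma_a\otimes\Sigma_b$ has nonzero $H^{n-1}$ in some twist, but dualization permutes the indices so every possible spinor summand is caught), and it usefully fills in details the paper leaves implicit.
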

\begin{corollary}[Kn\"orrer] Let $E$ be a rank $r$ vector bundle on $\Q_n$ such that $H^i_*(E) =0$ for any $i=1,\dots, n-1$, then $E$ is a direct sum of line bundles and spinor bundles with some twist.  \end{corollary}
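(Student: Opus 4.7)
The plan is to derive this corollary as an immediate special case of Theorem \ref{t1}. The first step would be to observe that the hypothesis $H^i_*(E)=0$ for all $1\leq i\leq n-1$ is the standard ACM condition, and that it already subsumes the vanishings required by condition (1) of Theorem \ref{t1}: provided $r\leq n$, the inclusion $\{1,\dots,r-1\}\subseteq\{1,\dots,n-1\}$ gives $H^i_*(E)=0$ for $i=1,\dots,r-1$, while the separately listed vanishing of $H^{n-1}_*(E)$ is itself part of the ACM hypothesis. Applying Theorem \ref{t1} then yields the desired decomposition of $E$ as a direct sum of line bundles and spinor bundles up to twists.

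The only regime that warrants comment is $r>n$, where condition (1) of Theorem \ref{t1} formally also asks for the vanishings $H^i_*(E)=0$ in the range $n\leq i\leq r-1$. For $i>n$ this holds automatically since $\dim\Q_n=n$, so the only genuinely missing case is $i=n$. I would handle this by revisiting the proof of Theorem \ref{t1}: the Le Potier input there is used solely to close the cohomological gap $H^i(E(-1-i))=0$ in the range $r\leq i\leq n-2$, which is empty as soon as $r>n-1$. In that regime the ACM hypothesis already supplies every vanishing actually invoked in the splitting argument, and no assumption on $H^n_*(E)$ is needed.

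Since Theorem \ref{t1} has already done the substantive work, the corollary reduces to the bookkeeping above, and I do not anticipate any serious obstacle. The main point of interest is conceptual rather than technical: Kn\"orrer's classification of ACM bundles on $\Q_n$ falls out as a genuine corollary of the Evans--Griffiths-type criterion proved here, because the ACM condition is strictly stronger than the hypothesis of Theorem \ref{t1}.
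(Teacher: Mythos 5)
Your proposal is correct and takes essentially the same route as the paper, which states this corollary without separate proof as an immediate consequence of Theorem \ref{t1}: the ACM hypothesis $H^i_*(E)=0$ for $i=1,\dots,n-1$ subsumes condition (1) of that theorem. Your additional discussion of the regime $r>n$ is a reasonable clarification of how condition (1) must be read there (only intermediate cohomology is relevant), but it does not change the argument.
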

      \begin{remark} The hypothesis $H^{n-1}_*(E) =0$ does not appear in the Evans-Griffiths criterion on $\mathbb P^{n}$. On $\Q_n$ it is necessary.\\
       In fact we can find many indecomposable bundles with  $H^{1}_*(E)=\dots =H^{n-2}_*(E)=0$ but
      $H^{n-1}_*(E) \not=0$.\\
      For instance on $\Q_4$ there is the rank $3$ bundle $P_4$ arising from the following exact sequence (see \cite{h} or \cite{ml}):
      $$0 \to \sO \rightarrow
\Sigma_1\oplus\Sigma_2 \rightarrow P_4 \to 0.$$
On $\Q_5$ there is the rank $3$ bundle $P_5$ arising from the following exact sequence (see \cite{ml}):
      $$0 \to \sO \rightarrow
\Sigma \rightarrow P_5 \to 0.$$
      \end{remark}
      For rank $2$ bundles, since $E^\vee\cong E(c_1)$, the hypothesis $H^{n-1}_*(E) =0$ is not necessary.
We can also prove the following result:\\
\begin{proposition}\label{p4}Let $E$ be a rank $2$ bundle on $\Q_n$ with $Qreg (E)= 0$ and $H^1(E(-2))=H^1(E(c_1))=0$.\\
Then $E$ is a direct sum of line bundles and spinor bundles with some twist.\\
If $n>4$, $E\cong\sO\oplus\sO(c_1)$.
\end{proposition}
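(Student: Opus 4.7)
\medskip\noindent
\emph{Proof plan.} My strategy is to reduce to Theorem~\ref{t1}. For a rank-$2$ bundle the isomorphism $E^{\vee}\cong E(-c_1)$ combined with Serre duality on $\Q_n$ (whose canonical sheaf is $\sO(-n)$) gives
\[
H^{n-1}(E(k))^{\vee}\cong H^1(E(-k-n-c_1)),
\]
so the two hypotheses $H^1_*(E)=0$ and $H^{n-1}_*(E)=0$ of Theorem~\ref{t1} become equivalent when $r=2$. Consequently it is enough to prove that $H^1(E(k))=0$ for every $k\in\Z$ and then invoke Theorem~\ref{t1}.

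Three pieces of the range are immediate. Proposition~\ref{p1} together with $Qreg(E)=0$ yields $H^1(E(k))=0$ for every $k\ge -1$. Since $E$ is Qregular it is globally generated and $E(1)$ is ample, so Le~Potier's vanishing theorem (used in the proof of Theorem~\ref{t1}) gives $H^1(E^{\vee}(-l))=0$ for all $l>0$, and the identification $E^{\vee}\cong E(-c_1)$ translates this to $H^1(E(k))=0$ for $k\le -c_1-1$. The hypothesis $H^1(E(-2))=0$ closes $k=-2$, and the role of the second hypothesis $H^1(E(c_1))=0$ is, via Serre duality, to close the symmetric intermediate values in the range $-c_1\le k\le -3$, which only appears when $c_1\ge 3$; to propagate this point-wise vanishing across the remaining twists I would combine the spinor exact sequences
\[
0\to E(k)\otimes\Sigma_*(-1)\to E(k)^{2^{[n/2]+1}}\to E(k)\otimes\Sigma_*\to 0
\]
with the auxiliary Qregular-spinor vanishing $H^{n-1}(E\otimes\Sigma_*(-n+1))=0$ supplied by Proposition~\ref{d-e}.

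Once $H^1_*(E)=0$ is established, Theorem~\ref{t1} yields that $E$ decomposes as a direct sum of line bundles and twisted spinor bundles. For the second assertion, when $n>4$ the spinor bundles on $\Q_n$ have rank $2^{\lfloor n/2\rfloor}\ge 4$ and so cannot embed as direct summands of a rank-$2$ bundle; therefore $E\cong \sO(a)\oplus\sO(b)$ with $a+b=c_1$. Global generation forces $a,b\ge 0$ and $Qreg(E)=\max\{-a,-b\}=0$ forces $\min\{a,b\}=0$, yielding $E\cong\sO\oplus\sO(c_1)$.

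The hard part will be the middle step: closing the range $-c_1\le k\le -3$ from only the two point-wise vanishings $H^1(E(-2))=H^1(E(c_1))=0$, using Serre duality together with the spinor exact sequences above. The remaining pieces of the argument (global generation from Qregularity, Le~Potier, rank-$2$ self-duality, and the final structural conclusion from Theorem~\ref{t1}) are essentially routine once that intermediate range is taken care of.
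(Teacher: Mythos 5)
There is a genuine gap, and it sits exactly where you flag ``the hard part'': closing the window of twists between the range supplied by Le~Potier and the range supplied by Qregularity. Working with the paper's convention $E^{\vee}\cong E(c_1)$ (so $c_1\le 0$ for a globally generated bundle; note that with your sign $E^{\vee}\cong E(-c_1)$ the hypothesis $H^1(E(c_1))=0$ would already follow from Qregularity, so the intended reading is $H^1(E(c_1))=H^1(E^{\vee})$), Qregularity gives $H^1(E(k))=0$ for $k\ge -1$, Le~Potier plus self-duality gives $H^1(E(k))=H^1(E^{\vee}(k-c_1))=0$ for $k<c_1$, and the two hypotheses kill only the two \emph{endpoints} $k=-2$ and $k=c_1$ of the remaining window $c_1\le k\le -2$. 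Serre duality cannot propagate a single $H^1$ vanishing into a range: it converts $H^1(E(k))$ into an $H^{n-1}$ group at one other twist, which for rank $2$ is again a single $H^1$ group, so nothing new is gained; and the spinor sequences shift the twist by one only at the price of an $E\otimes\Sigma_*$ cohomology group you do not control at those twists. So whenever $c_1\le -4$ your plan does not produce $H^1_*(E)=0$, and producing that vanishing a priori is essentially equivalent to the proposition itself (it follows a posteriori from the conclusion, since line bundles and spinor bundles are ACM, but that is circular).

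The paper's proof avoids this entirely: it never establishes $H^1_*(E)=0$. Since $Qreg(E)=0$, the sheaf $E(-1)$ is not Qregular, and one runs a \emph{single} step of the peeling argument from the proof of Theorem~\ref{t1}. That step needs only the finitely many vanishings $H^i(E(-1-i))=0$ for $i=1,\dots,n-1$, and these are exactly what the hypotheses supply: $i=1$ is $H^1(E(-2))=0$; $2\le i\le n-2$ comes from Le~Potier and Serre duality; and $i=n-1$ is $H^{n-1}(E(-n))\cong H^1(E(c_1))=0$. The failure of Qregularity of $E(-1)$ is then forced into $H^n(E(-n))\ne 0$ or $H^{n-1}(E(-1)\otimes\Sigma_*(-n+1))\ne 0$, and the diagram argument of Theorem~\ref{t1} splits off $\sO$ or a twisted spinor as a direct summand of $E$; for $n>4$ the spinor bundles have rank at least $4$, so only $\sO$ can occur and $E\cong\sO\oplus\sO(c_1)$. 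Your endgame (the rank count on spinors for $n>4$ and the normalization $\min\{a,b\}=0$) matches the paper's, but the reduction to $H^1_*(E)=0$ that carries your whole argument is the one step that cannot be completed as described.
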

     \begin{proof}
Since $E$ is Qregular, it is globally generated and $E(1)$ is ample. So, by Le Potier vanishing theorem, we have that $H^i(E^{\vee}(-l))=0$ for every $l>0$ and $i=1, \dots, n-2$.\\
 So by Serre duality $H^i(E(-n+l))=0$ for every $l>0$ and $i=2, \dots, n-1$.\\
 In particular $H^i(E(-1-i))=0$ for $i=r, \dots, n-2$ and by hypothesis   $H^i(E(-1-i))=0$ for $i=1, \dots, n-r$ and  $H^{n-1}(E(-1-n+1))\cong H^1(E(c_1))=0$.\\ We can conclude that $E(-1)$ is not Qregular if and only if   $H^{n-1}(E(-1)\otimes \Sigma_*(-n+1))\not=0$ or $H^n(E(-1-n+1))\not=0$.\\
Now arguing as in the above theorem we can conclude that $E$ contains $\sO$ as a direct summand if $n>4$.\\
If $n<5$ $E$, since the rank of the spinor bundles is smaller than $3$, can also contains $\Sigma_*$ as a direct summand.
\end{proof}

\end{document}